\def\Dj{\hbox{D\kern-.73em\raise.30ex\hbox{-}
\raise-.30ex\hbox{}}}
\def\dj{\hbox{d\kern-.33em\raise.80ex\hbox{-}
\raise-.80ex\hbox{\kern-.40em}}}
\newtheorem{theorem}{Theorem}
\newtheorem{theo}{Theorem}
\newtheorem{lemma}[theo]{Lemma}
\begin{document}

\baselineskip=0.30in

\vspace*{15mm}

\begin{center}
{\Large \bf Resolvent Energy of Unicyclic, Bicyclic \\ and Tricyclic Graphs}

\vspace{10mm}

{\large \bf Luiz Emilio Allem}$^1$, {\large \bf Juliane Capaverde}$^1$,
{\large \bf Vilmar Trevisan}$^1$, \\
{\large \bf Ivan Gutman}$^{2,3}$, {\large \bf Emir Zogi\'c}$^3$,
{\large \bf Edin Glogi\'c}$^3$

\vspace{9mm}

\baselineskip=0.20in

$^1${\it Instituto de Matem\'atica, UFRGS, Porto Alegre, RS, 91509--900, Brazil} \\
{\tt emilio.allem@ufrgs.br, juliane.capaverde@ufrgs.br, \\ trevisan@mat.ufrgs.br} \\[2mm]
$^2${\it Faculty of Science, University of Kragujevac, \\
Kragujevac, Serbia} \\
{\tt gutman@kg.ac.rs} \\[2mm]
$^3${\it State University of Novi Pazar, Novi Pazar, Serbia\/} \\
{\tt  ezogic@np.ac.rs , edinglogic@np.ac.rs}

\vspace{6mm}

(Received December 26, 2015)

\end{center}

\vspace{6mm}

\baselineskip=0.20in

\noindent {\bf Abstract }

\vspace{3mm}

{\small  The resolvent energy of a graph $G$ of order $n$ is
defined as $ER=\sum_{i=1}^n (n-\lambda_i)^{-1}$, where
$\lambda_1,\lambda_2,\ldots,\lambda_n$ are the eigenvalues of
$G$. In a recent work [Gutman et al., {\it MATCH Commun. Math.
Comput. Chem.\/} {\bf 75} (2016) 279--290] the structure of the
graphs extremal w.r.t. $ER$ were conjectured, based on an extensive
computer--aided search. We now confirm the validity of some of
these conjectures.}

\vspace{8mm}

\baselineskip=0.30in

\section{Introduction}

Let $G$ be a graph on $n$ vertices, and let
$\lambda_1 \geq \lambda_2 \geq \cdots \geq \lambda_n$ be its
eigenvalues, that is, the eigenvalues of the adjacency matrix
of $G$. The resolvent energy of $G$ is defined in \cite{gutman,gutman2} as
\begin{equation}\label{resolventenergy}
ER(G)= \sum_{i=1}^n \frac{1}{n-\lambda_i}\,.
\end{equation}

It was shown in \cite{gutman} that
\begin{equation}\label{momre}
ER(G) = \frac{1}{n} \sum_{k=0}^{\infty} \frac{M_k(G)}{n^k}
\end{equation}
where $M_k(G) = \sum_{i=1}^n \lambda_i^k$ is the $k$-th spectral moment of $G$.

In what follows, we present results found in the literature that confirm
some of the conjectures made in \cite{gutman,gutman2} on the resolvent
energy of unicyclic, bicyclic and tricyclic graphs. These results were
originally stated in \cite{MR2900703,MR3280710,MR3128540} in terms of
the Estrada index, another spectrum--based graph invariant related
to the spectral moments by the formula
\begin{equation*}
EE(G) = \sum_{k=0}^{\infty} \frac{M_k(G)}{k!}\,.
\end{equation*}

Most of the proofs in \cite{MR2900703,MR3280710,MR3128540} are based on
the spectral moments and work for the resolvent energy without any change.
The proofs that involve direct calculations with Estrada indices can be
easily modified to give the equivalent results concerning the resolvent
energy, as we show below. Thus, the following are determined:
\begin{itemize}
\item the unicyclic graph with maximum resolvent energy (Theorem \ref{unimax});
\item the unicyclic graphs with minimum resolvent energy (Theorems \ref{unimin} and \ref{newth});
\item the bicyclic graph with maximum resolvent energy (Theorem \ref{bimax});
\item the tricyclic graph with maximum resolvent energy (Theorem \ref{trimax}).
\end{itemize}

\vspace{3mm}

\section{Unicyclic graphs with maximum and minimum resolvent energy}

Let $X_n$ denote the unicyclic graph obtained from the cycle $C_3$ by
attaching $n-3$ pendent vertices to one of its vertices, and $\tilde{X}_n$
the unicyclic graph obtained from $C_4$ by attaching $n-4$ pendent vertices
to one of its vertices, as in Figure \ref{unicyclicmaxi}.

\begin{figure}
\centering
\includegraphics[width=8cm]{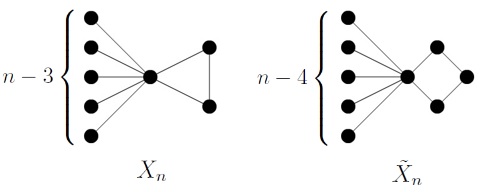}\\
\caption{Unicyclic graphs with maximum resolvent energy.}\label{unicyclicmaxi}
\end{figure}

\newpage

\begin{lemma}\label{maxunim}
Let $G$ be a unicyclic graph on $n \geq 4$ vertices, $G \not\cong X_n, \tilde{X}_n$.
\begin{enumerate}
\item If $G$ is bipartite, then $M_k(G) \leq M_k(\tilde{X}_n)$, for all $k \geq 0$, and $M_{k_0}(G)<M_{k_0}(\tilde{X}_n)$ for some $k_0$.

\item If $G$ is not bipartite (that is, $G$ contains an odd cycle), then $M_k(G) \leq M_k(X_n)$ for all $k\geq 0$, and $M_{k_0}(G)<M_{k_0}(X_n)$ for some $k_0$.
\end{enumerate}
\end{lemma}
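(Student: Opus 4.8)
The plan is to exploit the combinatorial meaning of the spectral moments: since $M_k(G)=\operatorname{tr}(A(G)^k)$ equals the number of closed walks of length $k$ in $G$, both parts of the lemma amount to showing that $X_n$ (resp.\ $\tilde{X}_n$) has, for every $k$, at least as many closed walks as any other non-bipartite (resp.\ bipartite) unicyclic graph on $n$ vertices. Because every unicyclic graph on $n$ vertices has exactly $n$ edges, the moments $M_0=n$, $M_1=0$ and $M_2=2n$ coincide for all competitors, so all the content lies in $k\ge 3$ (and, in the bipartite case, in $k\ge 4$, since the odd moments of a bipartite graph vanish). I would therefore prove both inequalities by transforming an arbitrary $G$ into the extremal graph through a finite sequence of local modifications, each of which is shown not to decrease any $M_k$.

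A unicyclic graph is a cycle $C_g$ together with rooted trees hanging off its vertices. The first family of moves concentrates this tree part. Using a grafting lemma of the standard type --- if $u,v$ are vertices of a graph with at least as many closed walks rooted at $u$ as at $v$ for every length, then relocating a pendant edge from $v$ to $u$ does not decrease any $\operatorname{tr}(A^k)$ --- I would first pull every hanging tree down to pendant edges attached directly to the cycle, and then migrate all these pendants to a single cycle vertex $w$ of largest degree. Each move is justified by an explicit injection of closed walks: a walk that dips into the relocated pendant is matched with a walk dipping into the pendant at its new, busier location. Throughout these steps the cycle length $g$ is untouched, so bipartiteness is preserved, and at the end $G$ has become $C_g$ with $n-g$ pendants attached at one vertex, with $g$ odd in the non-bipartite case and $g$ even in the bipartite case.

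The remaining step shortens the cycle while preserving its parity: from $C_g$ with a star at $w$ (for odd $g\ge 5$, resp.\ even $g\ge 6$) I would pass to $C_{g-2}$ with two extra pendants at the corresponding vertex, and show that no $M_k$ decreases. This is the heart of the argument and the step I expect to be the main obstacle, because shortening the cycle destroys precisely the closed walks that wind around it, and one must verify that the additional back-and-forth excursions into the two new pendants compensate for this loss simultaneously for every $k$. I would handle it by splitting closed walks according to whether they traverse the full cycle, comparing the walk generating functions of the two local pieces that are exchanged --- the arc that is deleted versus the pair of new pendants --- and checking coefficientwise dominance. Iterating drives $g$ down to $3$ (odd) or $4$ (even), turning $G$ into $X_n$ or $\tilde{X}_n$ exactly.

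Finally, for strictness: since $G\not\cong X_n,\tilde{X}_n$, at least one move in the sequence is nontrivial, and tracking the first moment it affects yields the required $k_0$. The cleanest illustrative instances are immediate: for a non-bipartite $G$ whose cycle is not a triangle one already has $M_3(G)=0<6=M_3(X_n)$, while in the bipartite case the squared-degree contribution to $M_4$, maximised by the single high-degree vertex of $\tilde{X}_n$, gives a strict inequality at $k_0=4$. In the remaining cases the nontrivial concentration or cycle-shortening move is exactly what supplies the strict inequality at the moment where it first acts.
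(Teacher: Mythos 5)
Your overall strategy---interpret $M_k$ as the number of closed walks of length $k$ and transform an arbitrary unicyclic $G$ into $X_n$ or $\tilde{X}_n$ by local moves that are monotone in every spectral moment simultaneously---is exactly the strategy underlying the result the paper invokes here: the paper gives no argument of its own but simply cites Lemmas 3.2, 3.5, 3.7 and 3.8 of Du and Zhou (\emph{Lin.\ Algebra Appl.} \textbf{436} (2012) 3149--3159), whose content is the grafting lemma and the cycle comparisons you describe. So you have correctly identified the right decomposition of the problem.

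The difficulty is that what you have written is a plan rather than a proof, and the part you leave open is precisely where all the work lies. The step ``pass from $C_g$ with a star at $w$ to $C_{g-2}$ with two extra pendants, and show that no $M_k$ decreases'' is the entire content of the cited Lemmas 3.5, 3.7 and 3.8; you yourself flag it as ``the main obstacle'' and offer only the intention to compare walk generating functions ``coefficientwise.'' Until that dominance is actually established---an injection or generating-function inequality valid for every $k$ at once, in both the odd- and even-girth cases---the lemma is not proved. The same applies, less severely, to the tree-flattening and pendant-migration steps: the grafting lemma you invoke requires verifying, for the two candidate roots $u,v$, that the closed-walk counts rooted at $u$ dominate those rooted at $v$ for every length, and this hypothesis must be checked at each application, not merely asserted. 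Finally, your strictness witnesses need a little more care: in the bipartite case $M_4=2n+4\sum_v\binom{d_v}{2}+8q_4$, so you must account for the $4$-cycle term $q_4$ as well as the degree concentration (a competitor of girth $4$ also has $q_4=1$), and show that $\tilde{X}_n$ strictly maximizes the combination; and in the non-bipartite girth-$3$ case $M_3$ gives no separation, so strictness must be extracted from the first nontrivial move, which again presupposes that the monotonicity of that move has been proved.
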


\vspace{3mm}

\begin{proof}
Part (i) follows from Lemmas 3.2, 3.5 and 3.8 in \cite{MR2900703}, and (ii)
follows from Lemmas 3.2, 3.5 and 3.7 in \cite{MR2900703}.
\end{proof}

We denote the characteristic polynomial of a graph $G$ by $\phi(G,\lambda)$.
For a proper subset $V_1$ of $V(G)$, $G-V_1$ denotes the graph obtained
from $G$ by deleting the vertices in $V_1$ (and the edges incident on them).
Let $G-v = G-\{v\}$, for $v \in V(G)$. We make use of the following lemma.

\vspace{3mm}

\begin{lemma} {\rm \cite{MR690768}} \label{lemma4.1}
Let $v \in V(G)$, and let $\mathcal{C}(v)$ be the set of cycles
containing $v$. Then
$$
\phi(G,\lambda) = \lambda\,\phi(G-v,\lambda) - \sum_{vw \in E(G)}\phi(G-v-w,\lambda)
-2 \sum_{Z\in \mathcal{C}(v)} \phi(G-V(Z),\lambda)
$$
where $\phi(G-v-w,\lambda) \equiv 1$ if $G$ is a single edge,
and $\phi(G-V{Z},\lambda) \equiv 1$ if $G$ is a cycle.
\end{lemma}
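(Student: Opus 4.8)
The plan is to prove the identity directly from the Leibniz expansion of the determinant $\phi(G,\lambda)=\det(\lambda I - A)$, where $A$ is the adjacency matrix of $G$, and then to organize the resulting sum over permutations according to the cycle that contains the distinguished vertex $v$. Writing $M=\lambda I-A$, so that $M_{ii}=\lambda$ and $M_{ij}=-A_{ij}$ for $i\neq j$, we have $\det M=\sum_{\sigma\in S_n}\mathrm{sgn}(\sigma)\prod_{i}M_{i\sigma(i)}$. A permutation contributes a nonzero term only when every non-fixed index $i$ satisfies $\{i,\sigma(i)\}\in E(G)$, so the support of $\sigma$ is carried by the edges of $G$. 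Since both $\mathrm{sgn}(\sigma)$ and $\prod_i M_{i\sigma(i)}$ are multiplicative over the disjoint cycles of $\sigma$, I would factor each term as (contribution of the $\sigma$-cycle through $v$) times (the analogous signed sum over the remaining vertices), the latter being exactly the characteristic polynomial of the induced subgraph on those vertices.

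Then I would split into three cases according to the length of the $\sigma$-cycle containing $v$. First, when $v$ is a fixed point of $\sigma$ its local factor is $M_{vv}=\lambda$ and the rest reconstructs $\phi(G-v,\lambda)$, giving the term $\lambda\,\phi(G-v,\lambda)$. Second, when $v$ lies in a transposition $(v\,w)$ one needs $\{v,w\}\in E(G)$; the local sign is that of a transposition, namely $-1$, and the entry product is $M_{vw}M_{wv}=(-1)(-1)=1$, so each neighbour $w$ contributes $-\phi(G-v-w,\lambda)$, summing to $-\sum_{vw\in E(G)}\phi(G-v-w,\lambda)$. Third, when $v$ lies in a cycle of length $\ell\geq 3$, the supporting edges trace out a genuine cycle $Z$ of $G$ through $v$; here the $\ell$-cycle has sign $(-1)^{\ell-1}$ and an entry product of $(-1)^{\ell}$, so its net local factor is $(-1)^{2\ell-1}=-1$, independent of $\ell$, and the remainder yields $\phi(G-V(Z),\lambda)$.

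The step I expect to require the most care is the bookkeeping in this third case: a single undirected cycle $Z\subseteq G$ through $v$ corresponds to exactly two directed $\ell$-cycles of $\sigma$ (the two orientations around $Z$), each contributing the same factor $-1$, which is precisely the origin of the coefficient $2$ in the stated formula. Collecting the three cases gives $\phi(G,\lambda)=\lambda\,\phi(G-v,\lambda)-\sum_{vw\in E(G)}\phi(G-v-w,\lambda)-2\sum_{Z\in\mathcal{C}(v)}\phi(G-V(Z),\lambda)$, as claimed. Finally, the two degenerate conventions stated in the lemma are automatic from this viewpoint: deleting $V(Z)$ or $\{v,w\}$ may leave no vertices, and the determinant of the empty $0\times 0$ matrix equals $1$, so the convention $\phi(\emptyset,\lambda)\equiv 1$ covers the single-edge and single-cycle cases without a separate argument.
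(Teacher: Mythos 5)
Your proof is correct. Note first that the paper itself offers no proof of this lemma---it is quoted from Cvetkovi\'c--Doob--Sachs \cite{MR690768}---so what you have written is a complete, self-contained derivation of a result the paper only cites. Your argument is essentially the classical one underlying the Sachs coefficient theorem: expand $\det(\lambda I-A)$ by the Leibniz formula, use multiplicativity of the sign and of the entry product over the disjoint cycles of each permutation, and classify terms by the $\sigma$-cycle containing $v$. All three cases check out: the fixed-point case gives $\lambda\,\phi(G-v,\lambda)$ because the complementary principal submatrix of $\lambda I-A$ is exactly $\lambda I-A(G-v)$; the transposition case gives local factor $(-1)\cdot M_{vw}M_{wv}=-1$ for each neighbour $w$ of $v$; and in the long-cycle case the net factor $(-1)^{\ell-1}(-1)^{\ell}=-1$ is indeed independent of $\ell$, with the coefficient $2$ correctly attributed to the two orientations of each undirected cycle $Z$ through $v$ --- which is precisely the point where a careless count would go wrong, and you flag it explicitly. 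Your remark that the degenerate conventions in the statement amount to $\phi(\emptyset,\lambda)=1$, the determinant of the empty matrix, is also right. The only step worth spelling out in a full write-up is the assertion that a $\sigma$-cycle of length $\ell\ge 3$ with nonvanishing entry product traces a genuine (simple) cycle of $G$: its support consists of distinct vertices, consecutive images must be adjacent for the product of entries $-A_{i\sigma(i)}$ to be nonzero, and the closing pair is likewise adjacent, so the support induces a cycle through $v$. With that one line added, the argument is complete.
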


\vspace{3mm}

\begin{theorem}\label{unimax}
Let $G$ be a unicyclic graph on $n \geq 4$ vertices. Then
$ER(G)\leq ER(X_n)$, with equality if and only if $G \cong X_n$.
Moreover, if $G$ is bipartite, then $ER(G) \leq ER(\tilde{X}_n)$,
with equality if and only if $G \cong \tilde{X}_n$.
\end{theorem}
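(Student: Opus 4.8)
The backbone of the argument is the moment expansion \eqref{momre}. Since every coefficient $n^{-(k+1)}$ appearing there is strictly positive, a dominance of spectral moments transfers verbatim to a dominance of resolvent energy: if $G$ and $H$ are graphs on the same number $n$ of vertices with $M_k(G)\le M_k(H)$ for all $k\ge 0$ and $M_{k_0}(G)<M_{k_0}(H)$ for at least one $k_0$, then $ER(G)<ER(H)$. The whole proof reduces to feeding the moment inequalities of Lemma \ref{maxunim} into this observation, together with one extra comparison that Lemma \ref{maxunim} does not supply.

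First I would dispatch the two cases covered directly by Lemma \ref{maxunim}. If $G$ is a unicyclic graph on $n\ge 4$ vertices with $G\not\cong X_n,\tilde{X}_n$ and $G$ is not bipartite, then part (ii) gives $M_k(G)\le M_k(X_n)$ for all $k$, strict for some $k_0$, whence $ER(G)<ER(X_n)$ by the observation above. If instead $G$ is bipartite (and $G\not\cong\tilde{X}_n$), part (i) gives $M_k(G)\le M_k(\tilde{X}_n)$, strict somewhere, and hence $ER(G)<ER(\tilde{X}_n)$. This last statement is exactly the ``Moreover'' assertion of the theorem, with its stated equality case, and it is complete as soon as Lemma \ref{maxunim} is granted.

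The crux is the first assertion for bipartite $G$: here I only have $ER(G)\le ER(\tilde{X}_n)$, so to reach $ER(G)\le ER(X_n)$ with equality only at $X_n$ I must show $ER(\tilde{X}_n)<ER(X_n)$. Note that Lemma \ref{maxunim} deliberately excludes both $X_n$ and $\tilde{X}_n$ from the role of $G$, so this comparison between the two candidate extremal graphs is a genuine gap that has to be filled separately. I would fill it by computing the two characteristic polynomials explicitly. Both graphs are highly symmetric, and Lemma \ref{lemma4.1} (or a direct quotient computation over the symmetry group) yields the factorizations $\phi(X_n,\lambda)=\lambda^{\,n-4}(\lambda+1)\bigl(\lambda^3-\lambda^2-(n-1)\lambda+(n-3)\bigr)$ and $\phi(\tilde{X}_n,\lambda)=\lambda^{\,n-4}\bigl(\lambda^4-n\lambda^2+2(n-4)\bigr)$. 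Since the maximum degree of either graph is at most $n-1<n$, the value $n$ exceeds every eigenvalue, so $\phi(X_n,n),\phi(\tilde{X}_n,n)>0$ and I may use the identity $ER(G)=\phi'(G,n)/\phi(G,n)$, with $\phi'$ the derivative in $\lambda$, to turn $ER(\tilde{X}_n)<ER(X_n)$ into a single rational inequality in $n$, to be verified for all $n\ge 4$.

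The main obstacle is precisely this last inequality. It is true but delicate: expanding $ER(X_n)-ER(\tilde{X}_n)$ in powers of $1/n$, the contributions of orders $1/n$, $1/n^2$ and $1/n^3$ all cancel, and positivity emerges only at order $n^{-4}$. An asymptotic argument therefore does not suffice, and I would instead clear denominators and check positivity of the resulting polynomial in $n$ directly on the range $n\ge 4$. Alternatively, one can compare $X_n$ and $\tilde{X}_n$ through their spectral moments: for odd $k$ one has $M_k(\tilde{X}_n)=0\le M_k(X_n)$ trivially, with strict inequality at $k=3$ coming from the triangle in $X_n$, so that the work is confined to verifying $M_k(X_n)\ge M_k(\tilde{X}_n)$ for even $k$ from the explicit spectra above; this route makes positivity manifest but still requires the even-$k$ power-sum comparison. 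Once $ER(\tilde{X}_n)<ER(X_n)$ is secured, the theorem follows: a bipartite $G$ satisfies $ER(G)\le ER(\tilde{X}_n)<ER(X_n)$, a non-bipartite $G\not\cong X_n$ satisfies $ER(G)<ER(X_n)$ by the first step, and equality in the first assertion holds only for $G\cong X_n$.
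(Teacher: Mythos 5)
Your proposal is correct and follows essentially the same route as the paper: Lemma \ref{maxunim} plus the moment expansion \eqref{momre} handle all $G\not\cong X_n,\tilde{X}_n$, and the remaining comparison $ER(\tilde{X}_n)<ER(X_n)$ is settled via $ER(G)=\phi'(G,n)/\phi(G,n)$ with the same characteristic polynomials (your factored $\phi(X_n,\lambda)$ expands to the paper's $\lambda^{n-4}(\lambda^4-n\lambda^2-2\lambda+n-3)$). The final positivity check you defer is exactly the computation the paper carries out, yielding $ER(X_n)-ER(\tilde{X}_n)=\frac{10n^4-24n^3+10n^2-4n+16}{(n^4-n^3-n-3)(n^4-n^3+2n-8)}>0$, consistent with your observation that the difference first appears at order $n^{-4}$.
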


\vspace{3mm}

\begin{proof}
Let $G$ be a unicyclic graph. Lemma \ref{maxunim} and equation \eqref{momre}
imply that $ER(G) \leq ER(X_n)$ if $G$ contains an odd cycle, and
$ER(G) \leq ER(\tilde{X}_n)$ if $G$ contains an even cycle
(i.e., $G$ is bipartite). Furthermore, equality occurs if and only if
$G \cong X_n$, in the case of an odd cycle, or $G \cong \tilde{X}_n$,
in the bipartite case.

Now, an $n$-vertex unicyclic graph with maximum resolvent energy
is either $X_n$ or $\tilde{X}_n$. We show that $ER(X_n)> ER(\tilde{X}_n)$,
for $n\geq 4$. Let $\phi(X_n,\lambda)$ and $\phi(\tilde{X}_n,\lambda)$ denote
the characteristic polynomials of $X_n$ and $\tilde{X}_n$, respectively.
Then, by \cite[Theorem 8]{gutman}, we have
\begin{equation}                    \label{dec25a}
ER(X_n) = \frac{\phi'(X_n,n)}{\phi(X_n,n)}
\hspace{10mm} \mbox{and} \hspace{10mm}
ER(\tilde{X}_n) = \frac{\phi'(\tilde{X}_n,n)}{\phi(\tilde{X}_n,n)}
\end{equation}
where $\phi'(G,\lambda) = \frac{d}{d\lambda}\,\phi(G,\lambda)$.
By Lemma \ref{lemma4.1}, it follows that
\begin{eqnarray*}
\phi(X_n,\lambda)& = & \lambda^{n-4}\,(\lambda^4-n\lambda^2-2\lambda+n-3) \\
\phi(\tilde{X}_n,\lambda) & = & \lambda^{n-4}\,(\lambda^4-n\lambda^2+2n-8)\,.
\end{eqnarray*}
Hence
\begin{eqnarray*}
ER(X_n) - ER(\tilde{X}_n) & = & \frac{\phi'(X_n,n)}{\phi(X_n,n)} - \frac{\phi'(\tilde{X}_n,n)}{\phi(\tilde{X}_n,n)} \\[3mm]
& = & \frac{\phi'(X_n,n)\,\phi(\tilde{X}_n,n)
- \phi'(\tilde{X}_n,n)\,\phi(X_n,n)}{\phi(X_n,n)\,\phi(\tilde{X}_n,n)} \\[3mm]
& = & \frac{10n^4-24n^3+10n^2-4n+16}{(n^4-n^3-n-3)(n^4-n^3+2n-8)}\,.
\end{eqnarray*}
The polynomial $p(\lambda)=10\lambda^4-24\lambda^3+10\lambda^2-4\lambda+16$
does not have any real roots, thus the numerator $p(n)$ is positive
for all $n$. The real roots of the polynomials $\lambda^4-\lambda^3-\lambda-3$
and $\lambda^4-\lambda^3+2\lambda-8$ are less than $2$, so the denominator
is positive for $n \geq 2$. It follows that $ER(X_n) - ER(\tilde{X}_n) > 0$.
\end{proof}

\vspace{3mm}

Let $C_n^*$ denote the unicyclic graph obtained by attaching
a pendent vertex to a vertex of $C_{n-1}$.

\vspace{3mm}

\begin{lemma}\label{minunim}
Let $G$ be a unicyclic graph on $n \geq 5$ vertices.
If $G \not\cong C_n,C_n^*$, then at least one of the
following holds:
\begin{enumerate}
\item $M_k(G) \geq M_k(C_n)$ for all $k\geq 0$, and $M_k(G)> M_k(C_n)$ for some $k_0 \geq 0$.
\item $M_k(G) \geq M_k(C_n^*)$ for all $k\geq 0$, and $M_k(G)> M_k(C_n^*)$ for some $k_0 \geq 0$.
\end{enumerate}
\end{lemma}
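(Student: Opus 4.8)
The plan is to argue entirely through the combinatorial reading of the spectral moments, namely that $M_k(G)$ counts the closed walks of length $k$ in $G$. Phrased this way, the lemma asserts that the cycle $C_n$ and the tadpole $C_n^*$ carry the fewest closed walks of every length among $n$-vertex unicyclic graphs, and that any other such graph carries at least as many as one of them, strictly more for some length. Because $ER$ is a nonnegative combination of the $M_k$ by \eqref{momre}, once the moment comparison is in hand the statement for $ER$ follows verbatim, exactly as Lemma~\ref{maxunim} was fed into Theorem~\ref{unimax}; so the whole task is the closed-walk count, which is also why the Estrada-index arguments in the cited literature transfer without change.

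The observation that organizes the ``at least one of (i), (ii)'' dichotomy is a bipartiteness match. A unicyclic graph is bipartite precisely when its unique cycle is even, and then $M_k=0$ for every odd $k$. Among the two candidates, $C_n$ (girth $n$) is bipartite iff $n$ is even and $C_n^*$ (girth $n-1$) is bipartite iff $n$ is odd, so for each fixed $n$ exactly one of them is bipartite and the other is not. I would prove (i) for those $G$ sharing the bipartiteness of $C_n$ and (ii) for those sharing that of $C_n^*$. The gain is that the odd moments then settle themselves: in a bipartite-versus-bipartite comparison both sides vanish on odd $k$, while in a nonbipartite-versus-nonbipartite comparison the target is the candidate of largest admissible odd girth, so its odd moments switch on latest and the smaller-girth graph automatically wins the odd lengths. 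In every case the matching candidate is just the unicyclic graph of maximal girth compatible with that parity, which is $C_n$ when that parity equals the parity of $n$ and $C_n^*$ otherwise.

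The heart of the proof is a chain of closed-walk-monotone transformations that fix $n$ and the girth parity while never increasing any $M_k$ and strictly lowering some $M_{k_0}$ unless $G$ is already the candidate. First I would straighten: any tree hanging off the cycle is replaced by a pendant path, and paths attached at distinct cycle vertices are consolidated into a single path at one vertex, reducing the number of branch vertices and thereby the short closed-walk counts such as $M_4$, and turning $G$ into a tadpole. Second I would enlarge the girth two vertices at a time, absorbing tail vertices into the cycle so as to push the girth up to its maximal admissible value while preserving parity; repeating both moves terminates exactly at $C_n$ (no leftover vertex) or at $C_n^*$ (the single leftover vertex forced to sit as a pendant). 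Since each step is $\le$ in every $M_k$ and at least one step is strict when $G\not\cong C_n,C_n^*$, composing the chain yields $M_k(G)\ge M_k(\text{candidate})$ for all $k$ with strict inequality for some $k_0$.

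The main obstacle is proving that each transformation is monotone in $M_k$ for \emph{all} $k$ at once, not merely for the low moments where a degree-sequence estimate is enough. The delicate step is the girth enlargement, since lengthening the cycle alters the global walk structure; here the low moments are largely blind to the girth, and the decisive gain appears only in the walks that wind around the cycle, first at the girth length itself. To make this rigorous I expect to need an explicit length-preserving injection of the closed walks of the larger-girth graph into those of the smaller, or equivalently a transfer/generating-function bookkeeping of the walks passing through the attachment vertex obtained by iterating the deletion formula of Lemma~\ref{lemma4.1}; that same deletion formula also yields the characteristic polynomials of $C_n$ and $C_n^*$ needed downstream to decide which of the two surviving candidates actually minimizes $ER$.
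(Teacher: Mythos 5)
Your outline reproduces, at the level of strategy, exactly what the paper does: it reduces the lemma to a finite chain of graph transformations, each of which is non-increasing in \emph{every} spectral moment and strictly decreasing in some $M_{k_0}$, terminating at $C_n$ or $C_n^*$. The paper simply imports these transformations from Lemmas 5.3--5.5 of Du and Zhou \cite{MR2900703}; you are reconstructing them from scratch. The difficulty is that you stop precisely where the content of the lemma lives. The statement ``$M_k$ does not increase for all $k$ simultaneously'' is not something that can be checked on low moments: your degree-sequence computations for $M_4$ say nothing about $M_k$ for general $k$, and for the girth-enlargement step you explicitly write that you ``expect to need an explicit length-preserving injection'' of closed walks without constructing one. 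That injection (or an equivalent transfer-matrix argument) \emph{is} the proof; without it the chain of transformations is an unverified plan, and the lemma is not established.

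Two smaller points. First, your claim that in the nonbipartite-to-nonbipartite comparison ``the smaller-girth graph automatically wins the odd lengths'' is only automatic for odd $k$ strictly below the girth of the target, where the target's odd moments vanish; for odd $k$ at or above that girth both graphs have positive odd moments and a genuine comparison (again an injection) is still required. Second, your parity-matching dichotomy (bipartite $G$ compared against whichever of $C_n$, $C_n^*$ is bipartite) is a reasonable and internally consistent way to organize the case split, and it is consistent with the lemma's ``at least one of'' formulation, but it is a strictly stronger assertion than the lemma and therefore needs at least as much proof. In short: correct roadmap, same route as the cited source, but the load-bearing step --- the all-$k$ walk-counting monotonicity of each transformation --- is named rather than proved.
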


\begin{proof}
If $G \not\cong C_n,C_n^*$, it follows from Lemmas 5.3, 5.4, 5.5 in \cite{MR2900703} that $G$ can be transformed into either $C_n$ or $C_n^*$ in a finite number of steps, in such a way that, at each step, the $k$-th spectral moment does not increase for each $k$, and decreases for some $k_0$.
\end{proof}

\begin{theorem}\label{unimin}
Let $G$ be a unicyclic graph on $n \geq 5$ vertices. If $G \not\cong C_n,C_n^*$, then $ER(G)> \min\{ER(C_n),ER(C_n^*)\}$.
\end{theorem}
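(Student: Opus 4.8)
The plan is to reduce the theorem to Lemma~\ref{minunim} via the moment expansion~\eqref{momre}, in exactly the way Theorem~\ref{unimax} was deduced from Lemma~\ref{maxunim}. Since $G$ is unicyclic on $n\geq 5$ vertices and $G\not\cong C_n,C_n^*$, Lemma~\ref{minunim} guarantees that one of its two alternatives holds. I would split into these two cases and treat them symmetrically.

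Suppose first that alternative (i) holds, so $M_k(G)\geq M_k(C_n)$ for every $k\geq 0$, with strict inequality at some index $k_0$. The key observation is that in the series~\eqref{momre} every coefficient $\tfrac{1}{n^{k+1}}$ is strictly positive because $n\geq 5>0$. Consequently the termwise domination of the moment sequences passes to the sums: each term of $\tfrac1n\sum_k M_k(G)/n^k$ is at least the corresponding term of $\tfrac1n\sum_k M_k(C_n)/n^k$, and the $k_0$-th term is strictly larger. Hence $ER(G)>ER(C_n)\geq \min\{ER(C_n),ER(C_n^*)\}$. The case in which alternative (ii) holds is identical with $C_n^*$ in place of $C_n$, yielding $ER(G)>ER(C_n^*)\geq \min\{ER(C_n),ER(C_n^*)\}$. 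Either way the desired strict inequality follows.

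The one technical point to verify is that the series in~\eqref{momre} converges, so that the termwise comparison is legitimate. This is immediate: for the connected graph $G$ the spectral radius equals $\max_i|\lambda_i|\leq \Delta\leq n-1<n$, so $|\lambda_i/n|<1$ for every $i$ and $\sum_k M_k(G)/n^k=\sum_i\sum_k(\lambda_i/n)^k$ converges absolutely. I expect no genuine obstacle here: unlike Theorem~\ref{unimax}, which required computing $\phi(X_n,\lambda)$ and $\phi(\tilde X_n,\lambda)$ explicitly in order to break the tie between the two candidate extremal graphs, the present statement deliberately compares $G$ only against $\min\{ER(C_n),ER(C_n^*)\}$ and so never needs to decide which of $C_n$ and $C_n^*$ is the true minimizer. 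That sharper comparison is precisely what is postponed to Theorem~\ref{newth}.
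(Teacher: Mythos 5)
Your proposal is correct and follows exactly the paper's route: the paper's entire proof of Theorem~\ref{unimin} is ``Follows from Lemma~\ref{minunim},'' and you have simply spelled out the termwise comparison via~\eqref{momre} (together with the routine convergence check) that this reduction tacitly uses.
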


\begin{proof}
Follows from Lemma \ref{minunim}.
\end{proof}

Using arguments that are not based on spectral moments, we can strengthen Theorem \ref{unimin}
as follows:

\begin{theorem} \label{newth}
Let $G$ be a unicyclic graph on $n \geq 5$ vertices. If $G \not\cong C_n$, then $ER(G) > ER(C_n)$.
\end{theorem}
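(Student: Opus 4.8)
The plan is to reduce the theorem to the single comparison $ER(C_n^*) > ER(C_n)$ and then to establish this inequality by a direct computation with characteristic polynomials, in the spirit of the proof of Theorem \ref{unimax}. Theorem \ref{unimin} already gives $ER(G) > \min\{ER(C_n), ER(C_n^*)\}$ for every unicyclic $G \not\cong C_n, C_n^*$. Hence, once we know $ER(C_n^*) > ER(C_n)$, the minimum on the right is always $ER(C_n)$, so $ER(G) > ER(C_n)$ for all such $G$; and the same inequality disposes of the remaining case $G \cong C_n^*$. Thus the whole statement follows from the claim that $ER(C_n^*) > ER(C_n)$ for $n \geq 5$.

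To prove the claim I would use the identity $ER(G) = \phi'(G,n)/\phi(G,n)$ from \cite[Theorem 8]{gutman}, as in \eqref{dec25a}. First I would record that each denominator is positive: since a unicyclic graph has maximum degree $\Delta \leq n-1$, its spectral radius $\rho$ satisfies $\rho \leq \Delta < n$, so every factor $n-\lambda_i$ of $\phi(G,n)=\prod_i (n-\lambda_i)$ is positive and $\phi(G,n)>0$. Consequently the sign of $ER(C_n^*) - ER(C_n)$ equals the sign of the cross-difference $N := \phi'(C_n^*,n)\,\phi(C_n,n) - \phi'(C_n,n)\,\phi(C_n^*,n)$, and it suffices to show $N>0$.

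Next I would compute the two characteristic polynomials. For the cycle one has the classical closed form $\phi(C_m,\lambda) = 2T_m(\lambda/2) - 2$ in terms of the Chebyshev polynomial $T_m$, while for $C_n^*$ Lemma \ref{lemma4.1} applied to the pendent vertex yields the reduction $\phi(C_n^*,\lambda) = \lambda\,\phi(C_{n-1},\lambda) - \phi(P_{n-2},\lambda)$, where $P_{n-2}$ is the path obtained by deleting the cycle vertex that carries the pendent. Substituting $\lambda = n = 2\cosh\theta$ turns all of these quantities, together with their $\lambda$-derivatives, into explicit hyperbolic expressions in $\theta = \operatorname{arccosh}(n/2)$; for instance one finds $ER(C_n) = n\coth(n\theta/2)/\sqrt{n^2-4}$.

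The main obstacle is the final inequality $N>0$. The difficulty is that $ER(C_n)$ and $ER(C_n^*)$ both tend to $1$ as $n\to\infty$, and their difference is only of order $n^{-5}$, so the comparison is delicate and cannot be read off from crude estimates; this is precisely why a spectral-moment argument fails here, the moment sequences being genuinely incomparable (e.g.\ for odd $n$ the odd moments of the bipartite graph $C_n^*$ vanish while those of the odd cycle $C_n$ do not, yet $M_4(C_n^*) > M_4(C_n)$). I would therefore reduce $N$ to a single-variable inequality in $\theta$ using the hyperbolic forms above, verify it for the base case $n=5$ by direct evaluation, and then confirm it for all $n\geq 5$ either by a monotonicity argument in $\theta$ or by bounding the subleading terms of the two resolvent energies. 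Checking that the resulting numerator is positive then completes the proof.
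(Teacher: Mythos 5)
Your reduction is sound and matches the paper's: by Theorem \ref{unimin} everything hinges on the single inequality $ER(C_n^*) > ER(C_n)$, and your preliminary observations (positivity of $\phi(G,n)$, the recursion $\phi(C_n^*,\lambda) = \lambda\,\phi(C_{n-1},\lambda) - \phi(P_{n-2},\lambda)$ from Lemma \ref{lemma4.1}, the closed form $ER(C_n) = n\coth(n\theta/2)/\sqrt{n^2-4}$ with $n = 2\cosh\theta$) are all correct. You also correctly diagnose why this case is genuinely harder than Theorems \ref{unimax} and \ref{bimax}: the difference is only of order $n^{-5}$ and the spectral moments of $C_n$ and $C_n^*$ are incomparable.

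The problem is that you stop exactly at the point where the proof has to be done. Having reduced everything to ``a single-variable inequality in $\theta$,'' you offer two unexecuted alternatives (``either by a monotonicity argument in $\theta$ or by bounding the subleading terms'') and then assert that ``checking that the resulting numerator is positive then completes the proof.'' That check is the entire content of the theorem beyond Theorem \ref{unimin}, and nothing in the proposal establishes it. The paper closes this gap concretely: it expands $\ln\bigl(\phi(C_n,\lambda)/\phi(C_n^*,\lambda)\bigr)$ in powers of $1/\lambda$, identifies the leading term of the difference as $-4\bigl(b_2(C_n)-b_2(C_n^*)\bigr)/n^5$, computes $b_2(C_n) = \tfrac{1}{2}n(n-3)$ and $b_2(C_n^*) = \tfrac{1}{2}(n-3)(n-4) + 2n - 7$ via the Sachs theorem so that $b_2(C_n) - b_2(C_n^*) = 1$, and concludes $ER(C_n) - ER(C_n^*) \approx -4/n^5 < 0$ for large $n$, with the finitely many small cases $n \le 15$ verified numerically in \cite{gutman}. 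Your hyperbolic-function route could plausibly be pushed through to an exact (non-asymptotic) proof, which would be an improvement over the paper's ``sufficiently large $n$ plus numerical check'' structure, but as written the decisive inequality is only named, not proved.
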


\begin{proof}
In view of Theorem \ref{unimin}, it is sufficient to prove that $ER(C_n) < ER(C_n^*)$.
In \cite{gutman}, the validity of this latter inequality was checked for $n \leq 15$.
Therefore, in what follows we may asume that $n > 15$, i.e., that $n$ is sufficiently
large.

Bearing in mind the relations (\ref{dec25a}), we get
$$
ER(G) = \left. \frac{d\ln \phi(G,\lambda)}{d\lambda} \right|_{\lambda=n}
$$
and therefore
\begin{eqnarray}
ER(C_n) - ER(C_n^*) & = & \left.  \left( \frac{d\ln \phi(C_n,\lambda)}{d\lambda}  -
\frac{d\phi(\ln C_n^*,\lambda)}{d\lambda}  \right) \right|_{\lambda=n} \nonumber \\[3mm]
& = & \left. \frac{d}{d\lambda} \ln \frac{\phi(C_n,\lambda)}{\phi(C_n^*,\lambda)}
\right|_{\lambda=n}\,. \label{dec25b}
\end{eqnarray}
The greatest eigenvalue of $C_n$ is 2, and the greatest eigenvalue of $C_n^*$ is
certainly less than 3. Therefore, bearing in mind that $C_n$ and $C_n^*$ contain no
triangles and no pentagons, for $\lambda=n$,
\begin{eqnarray*}
\phi(C_n,\lambda) & = & \lambda^n - n\,\lambda^{n-2} + b_2(C_n)\,\lambda^{n-4} + \cdots \\
\phi(C_n^*,\lambda) & = & \lambda^n - n\,\lambda^{n-2} + b_2^*(C_n)\,\lambda^{n-4} + \cdots
\end{eqnarray*}
and thus
$$
\frac{\phi(C_n,\lambda)}{\phi(C_n^*,\lambda)} = 1 + \frac{b_2(C_n) - b_2(C_n^*)}{\lambda^4}
+ O\left( \frac{1}{\lambda^6} \right)
$$
and
$$
\ln \frac{\phi(C_n,\lambda)}{\phi(C_n^*,\lambda)} = \frac{b_2(C_n) - b_2(C_n^*)}{\lambda^4}
+ O\left( \frac{1}{\lambda^6} \right) .
$$
Then because of (\ref{dec25b}),
\begin{equation}                    \label{dec25c}
ER(C_n) - ER(C_n^*) = -4\,\frac{b_2(C_n) - b_2(C_n^*)}{n^5} + O\left( \frac{1}{n^7} \right)\,.
\end{equation}

Using the Sachs coefficient theorem \cite{MR690768}, one can easily show that
$$
b_2(C_n) = \frac{1}{2}\,n(n-3) \hspace{10mm} \mbox{and} \hspace{10mm}
b_2(C_n^*) = \frac{1}{2}\,(n-3)(n-4) + 2n - 7
$$
from which one immediately gets that for sufficiently large values of $n$,
$$
ER(C_n) - ER(C_n^*) \approx -\frac{4}{n^5}
$$
i.e., $ER(C_n) < ER(C_n^*)$.
\end{proof}

\vspace{3mm}

\section{Bicyclic graphs with maximum resolvent energy}

Let $\theta(p,q,\ell)$ be the union of three internally disjoint
paths $P_{p+1}, P_{q+1}, P_{\ell+1}$ with common end vertices.
Let $Y_n$ denote the bicyclic graph obtained from $\theta(2,2,1)$ by
attaching $n-4$ pendent vertices to one of its vertices of degree 3,
and let $\tilde{Y}_n$ denote the bicyclic graph obtained from
$\theta(2,2,2)$ by attaching $n-5$ pendent vertices to one of its
vertices od degree 3, as in Figure \ref{bicyclics}.

\vspace{5mm}

\begin{figure}[h]
  \centering
  \includegraphics[width=10cm]{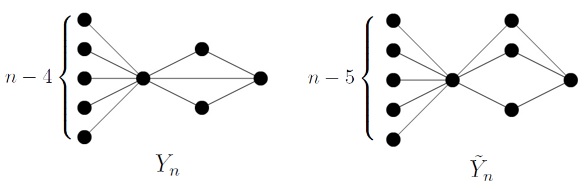}\\
  \caption{Bicyclic graphs with maximum resolvent energy.}\label{bicyclics}
\end{figure}

\newpage

\begin{lemma}\label{maxbim} Let $G$ be a bicyclic graph on $n \geq 5$ vertices, $G \not\cong Y_n, \tilde{Y}_n$. Then one of the following holds:
\begin{enumerate}
\item $M_k(G) \leq M_k(Y_n)$ for all $k\geq 0$, and $M_k(G)< M_k(Y_n)$ for some $k_0 \geq 0$.
\item $M_k(G) \leq M_k(\tilde{Y}_n)$ for all $k\geq 0$, and $M_k(G)< M_k(\tilde{Y}_n)$ for some $k_0 \geq 0$.
\end{enumerate}
\end{lemma}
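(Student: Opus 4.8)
The plan is to interpret the spectral moments combinatorially and reduce the statement to a chain of moment-monotone graph transformations, exactly as in the unicyclic case (Lemma \ref{maxunim}). Recall that $M_k(G) = \mathrm{tr}(A(G)^k)$ counts the closed walks of length $k$ in $G$; hence it suffices to exhibit transformations of $G$ that never decrease the number of closed walks of any length and strictly increase it for at least one length, and then to show that every bicyclic graph other than $Y_n, \tilde Y_n$ can be carried to $Y_n$ (if $G$ has an odd cycle) or to $\tilde Y_n$ (if $G$ is bipartite) by finitely many such steps. Splitting the analysis according to whether $G$ is bipartite is natural, since odd closed walks vanish in the bipartite case, and the two candidate extremal graphs are of exactly these two types: the core $\theta(2,2,1)$ of $Y_n$ contains triangles, whereas the core $\theta(2,2,2) = K_{2,3}$ of $\tilde Y_n$ is bipartite.

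The engine of the argument is a \emph{concentration} (grafting) lemma: if $v$ is a vertex of a connected graph $H$, then attaching at $v$ a star of $s$ pendent edges yields at least as many closed walks of every length as attaching a pendent path of length $s$, with strict inequality for some length. (A quick sanity check: the star $K_{1,s}$ satisfies $M_4 = 2s^2$, while the path $P_{s+1}$ gives a strictly smaller value once $s \ge 3$.) Applying this repeatedly, all pendent trees hanging off the cyclic base of $G$ may be replaced by pendent edges attached to a single base vertex, without decreasing any $M_k$. This reduces the problem to graphs whose base is one of the three bicyclic cores---two cycles sharing a vertex, two cycles joined by a path, or a theta graph $\theta(p,q,\ell)$---carrying a star of pendent edges at one base vertex.

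The second stage is to normalise the cyclic base itself. Here I would argue that among all bicyclic cores, contracting the base toward the smallest theta graph compatible with the parity of $G$ does not decrease closed-walk counts: a non-bipartite core can be pushed to $\theta(2,2,1)$ and a bipartite core to $\theta(2,2,2)$, again through a finite chain of moment-monotone local moves (shortening the paths of the core, and trading a split or loosely coupled core for the tightly coupled theta). Combining the two stages, any $G \not\cong Y_n, \tilde Y_n$ satisfies $M_k(G) \le M_k(Y_n)$ for all $k$ when $G$ has an odd cycle, and $M_k(G) \le M_k(\tilde Y_n)$ for all $k$ when $G$ is bipartite, with strict inequality for some $k_0$; this is precisely one of the two alternatives in the statement.

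The main obstacle is establishing monotonicity of the base-normalising moves. Concentrating pendent edges is comparatively clean---one can set up an injection between closed walks or compare Sachs coefficients---but reshaping the core alters its cycle lengths, and hence its odd-walk structure, so the comparison must be made carefully to guarantee that shortening or merging cycles never destroys more closed walks than it creates. I expect this step to require the full case analysis over the three core types developed in \cite{MR3280710,MR3128540}, together with the characteristic-polynomial recursion of Lemma \ref{lemma4.1} to track how closed-walk counts respond to each local modification.
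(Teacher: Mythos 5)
Your plan is essentially the paper's proof: the paper disposes of this lemma by citing Lemma 3.1, Theorems 3.2--3.3 and Lemma 3.4 of \cite{MR3280710}, which carry out exactly the two-stage moment-monotone reduction you describe (concentrating pendent trees into a star at one vertex of the base, then comparing the possible bicyclic cores against $\theta(2,2,1)$ and $\theta(2,2,2)$), the whole point of the paper being that these closed-walk arguments transfer verbatim from the Estrada index to $ER$ via \eqref{momre}. Like the paper, you defer the core case analysis to \cite{MR3280710}, so the two arguments coincide in substance.
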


\vspace{3mm}

\begin{proof}
Follows from Lemma 3.1, Theorems 3.2 and 3.3, and Lemma 3.4 in \cite{MR3280710}.
\end{proof}

\begin{theorem}\label{bimax}
Let $G$ be a bicyclic graph on $n \geq 5$ vertices. Then
$ER(G)\leq ER(Y_n)$, with equality if and only if $G \cong Y_n$.
\end{theorem}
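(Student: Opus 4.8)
The proof follows the same strategy as that of Theorem~\ref{unimax}. Combining Lemma~\ref{maxbim} with the moment expansion \eqref{momre}, every bicyclic graph $G$ on $n \geq 5$ vertices with $G \not\cong Y_n,\tilde{Y}_n$ satisfies either $ER(G) < ER(Y_n)$ or $ER(G) < ER(\tilde{Y}_n)$: indeed, the nonnegative differences $M_k(Y_n) - M_k(G)$ (resp.\ $M_k(\tilde{Y}_n) - M_k(G)$) are weighted in \eqref{momre} by the strictly positive factors $n^{-(k+1)}$ and are strictly positive for some $k_0$, so the termwise inequality passes to the sums. Hence a bicyclic graph of maximum resolvent energy must be isomorphic to $Y_n$ or to $\tilde{Y}_n$, and it remains only to prove $ER(Y_n) > ER(\tilde{Y}_n)$ for all $n \geq 5$.

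To carry out this comparison I would use the identity $ER(G) = \phi'(G,n)/\phi(G,n)$ from \cite[Theorem~8]{gutman}, exactly as in \eqref{dec25a}; this is legitimate because every eigenvalue of an $n$-vertex graph is strictly less than $n$, so $\lambda = n$ is not a root of $\phi(G,\lambda)$. Applying Lemma~\ref{lemma4.1} at a vertex of degree three and peeling off the attached pendent vertices yields closed forms
$$
\phi(Y_n,\lambda) = \lambda^{\,n-k_1}\,q_1(\lambda), \qquad
\phi(\tilde{Y}_n,\lambda) = \lambda^{\,n-k_2}\,q_2(\lambda),
$$
where $q_1$ and $q_2$ are explicit polynomials of small degree.

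Next I would form
$$
ER(Y_n) - ER(\tilde{Y}_n) =
\frac{\phi'(Y_n,n)\,\phi(\tilde{Y}_n,n) - \phi'(\tilde{Y}_n,n)\,\phi(Y_n,n)}
{\phi(Y_n,n)\,\phi(\tilde{Y}_n,n)}
$$
and simplify the right-hand side to a single rational function of $n$. Because $n$ exceeds the spectral radius of any graph on $n$ vertices, both $\phi(Y_n,n) = \prod_i (n-\lambda_i)$ and $\phi(\tilde{Y}_n,n)$ are positive, so the denominator is positive and the sign of the difference is governed entirely by the numerator.

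The main obstacle is therefore to show that this numerator polynomial is positive for every $n \geq 5$. As with the polynomial $p(\lambda)$ in the proof of Theorem~\ref{unimax}, I would try to establish either that it has no real roots at all or that all of its real roots lie below $5$, using an explicit root bound (for instance a Cauchy-type bound or an analysis of the coefficient sign pattern). Once this positivity is secured, $ER(Y_n) > ER(\tilde{Y}_n)$ follows; together with the strict inequalities from the first step this gives $ER(G) < ER(Y_n)$ for every bicyclic $G \not\cong Y_n$, yielding both the bound and the equality case.
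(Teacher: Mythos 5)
Your proposal follows exactly the route of the paper's own proof: Lemma~\ref{maxbim} reduces the problem to comparing $Y_n$ and $\tilde{Y}_n$, and the comparison is carried out via $ER(G)=\phi'(G,n)/\phi(G,n)$, the characteristic polynomials from Lemma~\ref{lemma4.1} (the paper obtains $\phi(Y_n,\lambda)=\lambda^{n-4}[\lambda^4-(n+1)\lambda^2-4\lambda+2(n-4)]$ and $\phi(\tilde{Y}_n,\lambda)=\lambda^{n-4}[\lambda^4-(n+1)\lambda^2+3(n-5)]$), and a sign analysis of the resulting rational function, whose numerator $16n^4-34n^3+8n^2+2n+60$ the paper shows has no real roots. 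The only thing missing from your write-up is the explicit computation itself, which proceeds exactly as you describe.
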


\vspace{3mm}

\begin{proof}
By Lemma \ref{maxbim}, a graph with maximum resolvent energy among
$n$-vertex bicyclic graphs is either $Y_n$ or $\tilde{Y}_n$.
Thus, it is sufficient to show that $ER(Y_n)> ER(\tilde{Y}_n)$,
for $n\geq 5$. Let $\phi(Y_n,\lambda)$ and $\phi(\tilde{Y}_n,\lambda)$
denote the characteristic polynomials of $Y_n$ and $\tilde{Y}_n$,
respectively. Then, by \cite[Theorem 8]{gutman}, we have
$$
ER(Y_n) = \frac{\phi'(Y_n,n)}{\phi(Y_n,n)} \hspace{10mm} \mbox{and} \hspace{10mm}
ER(\tilde{Y}_n) = \frac{\phi'(\tilde{Y}_n,n)}{\phi(\tilde{Y}_n,n)}\,.
$$
By Lemma \ref{lemma4.1}, it follows that
\begin{eqnarray*}
\phi(Y_n,\lambda)& = & \lambda^{n-4}\,\big[\lambda^4-(n+1)\lambda^2 - 4\lambda + 2(n-4) \big] \\
\phi(\tilde{Y}_n,\lambda)& = & \lambda^{n-4}\,\big[\lambda^4-(n+1)\lambda^2+3(n-5) \big]\,.
\end{eqnarray*}
Hence
\begin{eqnarray*}
ER(Y_n) - ER(\tilde{Y}_n) & = & \frac{\phi'(Y_n,n)}{\phi(Y_n,n)} - \frac{\phi'(\tilde{Y}_n,n)}{\phi(\tilde{Y}_n,n)} \\[3mm]
& = & \frac{\phi'(Y_n,n)\,\phi(\tilde{Y}_n,n)
-\phi'(\tilde{Y}_n,n)\,\phi(Y_n,n)}{\phi(Y_n,n)\,\phi(\tilde{Y}_n,n)} \\[3mm]
& = & \frac{16n^4-34n^3+8n^2+2n+60}{(n^4-n^3-n^2-2n-8)(n^4-n^3-n^2+3n-15)}\,.
\end{eqnarray*}
The polynomial $p(x)=16x^4-34x^3+8x^2+2x+60$ does not have any real roots,
thus the numerator $p(n)$ is positive for all $n$. The real roots of the
polynomials $x^4-x^3-x^2-2x-8$ and $x^4-x^3-x^2+3x-15$ are less than 3,
so the denominator is positive for $n \geq 3$. It follows that
$ER(Y_n) - ER(\tilde{Y}_n) > 0. $
\end{proof}

\vspace{3mm}

\section{Tricyclic graphs with maximum resolvent energy}

Let $Z_n^i$, $1 \leq i \leq 6$, be the graphs given in Figure \ref{tricyclics}.
\begin{figure}[h]
\centering
\includegraphics[width=15cm]{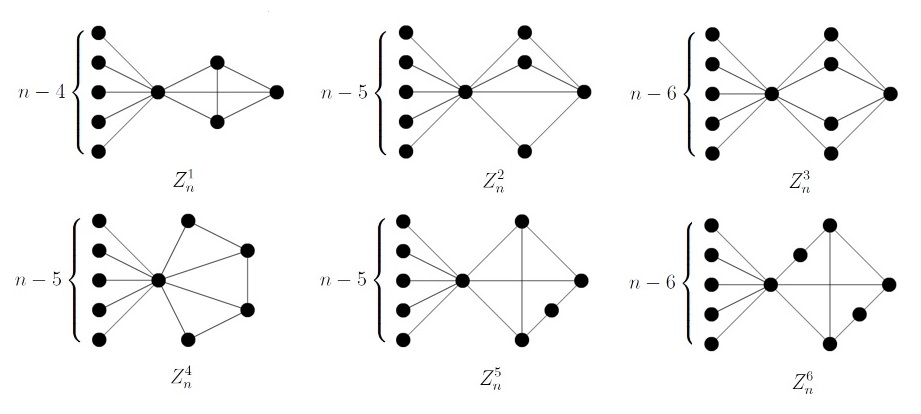}\\
\caption{Tricyclic graphs with maximum resolvent energy.}\label{tricyclics}
\end{figure}

\begin{lemma}\label{maxtrim}
Let $G$ be a bicyclic graph on $n \geq 4$ vertices such that
$G \not\cong Z_n^i$, for $1 \leq i \leq 6$. Then, some
$i \in \{1,2,3,4,5,6\}$, $M_k(G) \leq M_k(Z_n^i)$ for all
$k \geq 0$, and $M_k(G)< M_k(Z_n^i)$ for some $k_0 \geq 0$.
\end{lemma}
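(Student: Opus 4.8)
The plan is to follow exactly the template already used for Lemmas \ref{maxunim} and \ref{maxbim}: rather than estimating the eigenvalues of $G$ directly, I would work with the spectral moments $M_k$ and reduce the claim to a comparison result already available in the Estrada-index literature, here \cite{MR3128540}. The point that makes this transfer legitimate is that $M_k(G)$ equals the number of closed walks of length $k$ in $G$; hence any local graph operation that destroys no closed walks of any length, and that creates at least one new closed walk of some length, is automatically nondecreasing in every $M_k$ and strictly increasing in some $M_{k_0}$. The six graphs $Z_n^1,\dots,Z_n^6$ of Figure \ref{tricyclics} are precisely the candidates that are maximal under such operations, so the whole assertion amounts to showing that the bicyclic graph $G\not\cong Z_n^i$ can always be pushed toward one of them.

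Concretely, I would proceed in two stages. First, I would set up the family of $M_k$-monotone transformations used in \cite{MR3128540} --- typically edge-sliding of pendent paths toward a vertex of maximum degree, together with the coalescence or local rearrangement of cycles --- and, for each, verify via the closed-walk interpretation of $M_k$ (equivalently, via the Sachs coefficient theorem \cite{MR690768} underlying Lemma \ref{lemma4.1}) that applying it never decreases any $M_k$ while strictly increasing some $M_{k_0}$. Second, I would run the finite reduction: starting from an arbitrary admissible $G$ on $n\geq 4$ vertices with $G\not\cong Z_n^i$, apply these transformations, argue that the process terminates after finitely many steps, and show that the only terminal graphs produced are the six $Z_n^i$. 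Since every step is $M_k$-monotone and at least one step is strict, composing the chain yields $M_k(G)\leq M_k(Z_n^i)$ for all $k$ and $M_{k_0}(G)<M_{k_0}(Z_n^i)$ for some $k_0$, which is the assertion. Because the arguments in \cite{MR3128540} are phrased purely in terms of spectral moments, they apply here without change; once the lemma is in hand, equation \eqref{momre} transfers it immediately to $ER$, giving the $ER$ analogue of Theorems \ref{unimax} and \ref{bimax}.

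The main obstacle is the case analysis underlying the finite reduction. Because the extremal family has six members, rather than the one or two seen in the unicyclic and bicyclic settings, the delicate part is the bookkeeping: one must classify the possible arrangements of the cycles and the attached trees of $G$, and in each branch identify which $Z_n^i$ the graph reduces to while certifying that some transformation along the way is strict. I expect the strictness certification to demand the most care, since in every branch one has to exhibit an explicit $k_0$ --- a closed walk present in the target $Z_n^i$ but absent in $G$ --- rather than merely a non-strict inequality. Fortunately this enumeration has already been carried out in \cite{MR3128540}, so I would invoke the relevant lemmas and theorems there directly, exactly as the proofs of Lemmas \ref{maxunim} and \ref{maxbim} invoke \cite{MR2900703} and \cite{MR3280710}.
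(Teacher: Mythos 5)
Your proposal is correct and matches the paper's proof in substance: the paper disposes of this lemma with a one-line citation (Corollaries 3.5 and 3.9 and Lemma 3.10 of \cite{MR3128540}), relying precisely on the fact that those arguments are phrased in terms of spectral moments and hence transfer verbatim from the Estrada index to $ER$. Your additional description of the closed-walk machinery and the finite reduction is a faithful account of what that reference does, but the paper itself does not re-derive any of it.
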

\begin{proof}
Follows from Corollaries 3.5 and 3.9 and Lemma 3.10 in \cite{MR3128540}.
\end{proof}

\vspace{3mm}

\begin{theorem}\label{trimax}
Let $G$ be a tricyclic graph on $n\geq 4$ vertices. Then
$ER(G) \leq ER(Z_n^1)$, with equality if and only if $G \cong Z_n^1$.
\end{theorem}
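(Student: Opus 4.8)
The plan is to mirror the proofs of Theorems \ref{unimax} and \ref{bimax}. First I would combine Lemma \ref{maxtrim} with equation \eqref{momre}. Because every coefficient $1/n^{k+1}$ in \eqref{momre} is positive, the hypotheses $M_k(G)\le M_k(Z_n^i)$ for all $k$, with strict inequality for some $k_0$, force $ER(G)<ER(Z_n^i)$. Hence any tricyclic graph $G$ not isomorphic to one of $Z_n^1,\dots,Z_n^6$ has resolvent energy strictly below that of one of these six, so a tricyclic maximizer of $ER$ must lie in $\{Z_n^1,\dots,Z_n^6\}$. It then suffices to prove $ER(Z_n^1)>ER(Z_n^i)$ for each $i\in\{2,\dots,6\}$: this identifies $Z_n^1$ as the unique maximizer and simultaneously disposes of the equality claim, since equality can then hold only for $G\cong Z_n^1$.

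For the five comparisons I would use the identity $ER(G)=\phi'(G,n)/\phi(G,n)$ of \cite[Theorem 8]{gutman}. Each $Z_n^i$ is a fixed tricyclic core of bounded order carrying a bundle of pendent vertices at a single vertex, so iterated use of Lemma \ref{lemma4.1} gives $\phi(Z_n^i,\lambda)=\lambda^{\,n-d_i}\,q_i(\lambda)$, where $q_i$ has small fixed degree in $\lambda$ and coefficients polynomial in $n$. Clearing denominators, each difference becomes an explicit rational function of $n$,
\begin{equation*}
ER(Z_n^1)-ER(Z_n^i)=\frac{\phi'(Z_n^1,n)\,\phi(Z_n^i,n)-\phi'(Z_n^i,n)\,\phi(Z_n^1,n)}{\phi(Z_n^1,n)\,\phi(Z_n^i,n)}=\frac{N_i(n)}{D_i(n)}.
\end{equation*}

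The last step is to certify that each ratio is positive for $n\ge 4$, precisely as in the earlier proofs. The denominator $D_i$ factors through the reduced polynomials $q_1$ and $q_i$ evaluated at $\lambda=n$, whose real roots lie below a small constant, so $D_i(n)>0$ for $n\ge 4$. Each numerator $N_i$ is a polynomial in $n$ with positive leading coefficient, and I would show it remains positive on $n\ge 4$, either by confirming it has no real roots at all or by bounding its largest real root below $4$. Together these give $ER(Z_n^1)>ER(Z_n^i)$ for $i=2,\dots,6$, which completes the argument.

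I expect the principal obstacle to be computational scale rather than any new idea. The unicyclic and bicyclic theorems each needed only one terminal comparison, whereas here five comparisons must be carried out, each requiring its own application of Lemma \ref{lemma4.1} and its own root-location analysis of a numerator and a product denominator. The delicate point is verifying that none of the numerators $N_i$ acquires a spurious positive real root in the range $n\ge 4$; this is exactly what secures the strict inequalities, and therefore the uniqueness of $Z_n^1$ asserted in the theorem.
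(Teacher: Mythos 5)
Your proposal is correct and follows essentially the same route as the paper's own proof: reduce to the six candidates $Z_n^1,\dots,Z_n^6$ via Lemma \ref{maxtrim}, then compute each $\phi(Z_n^i,\lambda)=\lambda^{n-d_i}f_i(\lambda)$ with Lemma \ref{lemma4.1}, form the five differences $ER(Z_n^1)-ER(Z_n^i)$ as explicit rational functions of $n$ using $ER=\phi'(\cdot,n)/\phi(\cdot,n)$, and certify positivity by locating the real roots of the numerators and of the $f_i$. The paper carries out exactly these computations, concluding positivity for $n\geq 3$, so no gap remains.
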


\vspace{3mm}

\begin{proof} By Lemma \ref{maxtrim}, a graph with maximum resolvent
energy among $n$-vertex tricyclic graphs is equal to $Z_n^i$, for
some $1 \leq i \leq 6$.

By Lemma \ref{lemma4.1}, it follows that
\begin{eqnarray*}
\phi(Z_n^1,\lambda)& = & \lambda^{n-5}\,
(\lambda^5-(n+2)\lambda^3-8\lambda^2+3(n-5)\lambda+2(n-4)) = \lambda^{n-5}\,f_1(\lambda) \\
\phi(Z_n^2,\lambda)& = & \lambda^{n-4}\,
(\lambda^4-(n+2)\lambda^2-6\lambda+3(n-5))=\lambda^{n-4}\,f_2(\lambda) \\
\phi(Z_n^3,\lambda)& = & \lambda^{n-4}\,
(\lambda^4-(n+2)\lambda^2+4(n-6))=\lambda^{n-4}\,f_3(\lambda) \\
\phi(Z_n^4,\lambda)& = & \lambda^{n-6}\,
(\lambda^6-(n+2)\lambda^4-6\lambda^3+3(n-4)\lambda^2+2\lambda-(n-5))=\lambda^{n-6}\,f_4(\lambda) \\
\phi(Z_n^5,\lambda)& = & \lambda^{n-5}\,
(\lambda^5-(n+2)\lambda^3-4\lambda^2+4(n-4)\lambda+4)=\lambda^{n-5}\,f_5(\lambda) \\
\phi(Z_n^6,\lambda)& = & \lambda^{n-6}\,
(\lambda^6-(n+2)\lambda^4+5(n-5)\lambda^2-2(n-8))=\lambda^{n-6}\,f_6(\lambda)\,.
\end{eqnarray*}

For $2\leq i\leq 6$, we have
\begin{eqnarray*}
ER(Z_n^1) - ER(Z_n^i) & = & \frac{\phi'(Z_n^1,n)}{\phi(Z_n^1,n)}
- \frac{\phi'(Z_n^i,n)}{\phi(Z_n^i,n)} \\[3mm]
& = & \frac{\phi'(Z_n^1,n)\,\phi(Z_n^i,n)
- \phi'(Z_n^i,n)\,\phi(Z_n^1,n)}{\phi(Z_n^1,n)\,\phi(Z_n^i,n)}\,.
\end{eqnarray*}
Straightforward calculation yields
\begin{eqnarray*}
ER(Z_n^1) - ER(Z_n^2) & = & \frac{6n^6-12n^5+42n^4-18n^3-42n-120}{n\,f_1(n)\,f_2(n)} \\[3mm]
ER(Z_n^1) - ER(Z_n^3) & = & \frac{28n^6-56n^5+44n^4-8n^3+136n^2+80n-192}{n\,f_1(n)\,f_3(n)} \\[3mm]
ER(Z_n^1) - ER(Z_n^4) & = & \frac{6n^8+40n^6-2n^5-48n^4-96n^3-188n^2-132n-40}{n\,f_1(n)\,f_4(n)} \\[3mm]
ER(Z_n^1) - ER(Z_n^5) & = & \frac{16n^7-20n^6+56n^5-40n^4+4n^3-52n^2-188n}{n\,f_1(n)\,f_5(n)} \\[3mm]
ER(Z_n^1) - ER(Z_n^6) & = &
\frac{32n^8\!-\!62n^7\!+\!30n^6\!+\!92n^5\!+\!94n^4\!-\!2n^3\!-\!
432n^2\!-\!432n\!-\!128}{n\,f_1(n)\,f_6(n)}\,.
\end{eqnarray*}

All the real roots of the polynomials that appear in the numerators are
less than 2. Moreover, all the real roots of the polynomials $f_i$, $1\leq i \leq 6$,
are less than 3. It follows that the numerator end denominator in the quotients
above are positive for $n\geq 3$. Hence
$ER(Z_n^1) - ER(Z_n^i)>0$ for $2 \leq i \leq 6$.
\end{proof}

\end{document}